\definecolor{webgreen}{rgb}{0,.5,0}
\definecolor{webbrown}{rgb}{.6,0,0}
\definecolor{red}{rgb}{1,0,0}
\newcommand{\seqnum}[1]{\href{http://oeis.org/#1}{\underline{#1}}}
\newcommand{\R}{{\mathbb R}}
\newcommand{\Q}{{\mathbb Q}}
\newcommand{\N}{{\mathbb N}}
\newcommand{\Z}{{\mathbb Z}}
\newcommand{\Phibar}{\overline{\Phi}}
\newcommand{\sg}{\mathrm{sg}}
\def\EMdash{\leavevmode\hbox to 10.6mm{\vrule height .63ex depth -.59ex
    width 10mm\hfill}}
\title{Summation of Certain Infinite Lucas-Related Series}
\author{\sc Bakir FARHI \\
Laboratoire de Mathématiques appliquées \\
Faculté des Sciences Exactes \\
Université de Bejaia, 06000 Bejaia, Algeria \\[1mm]
\href{mailto:bakir.farhi@gmail.com}{bakir.farhi@gmail.com}
}
\date{}
\begin{document}

\theoremstyle{plain}
\newtheorem{thm}{Theorem}[section]
\newtheorem{coll}[thm]{Corollary}
\newtheorem{lemma}[thm]{Lemma}
\newtheorem{prop}[thm]{Proposition}

\theoremstyle{definition}
\newtheorem{defi}[thm]{Definition}
\newtheorem{example}[thm]{Example}
\newtheorem{conj}[thm]{Conjecture}

\theoremstyle{remark}
\newtheorem{remark}[thm]{Remark}
\newtheorem{remarks}[thm]{Remarks}

\numberwithin{equation}{section}

\begin{flushleft}
\textit{J. Integer Sequences}, \\
\textbf{22} (2019), Article 19.1.6. 
\end{flushleft}

\begin{center}
\vskip 1cm{\LARGE\bf Summation of Certain Infinite Lucas-Related \\[2mm] Series}
\vskip 1cm
\large
Bakir Farhi\\
Laboratoire de Math\'ematiques appliqu\'ees\\
Facult\'e des Sciences Exactes\\
Universit\'e de Bejaia, 06000 Bejaia, Algeria\\
\href{mailto:bakir.farhi@gmail.com}{\tt bakir.farhi@gmail.com} \\
\end{center}

\vskip .2 in

\begin{abstract}
In this paper, we find the sums in closed form of certain type of Lucas-related convergent series. More precisely, we generalize the results already obtained by the author in his arXiv paper entitled: ``Summation of certain infinite Fibonacci related series''. 
\end{abstract}

\section{Introduction}
Throughout this paper, we let $\N^*$ denote the set $\N \setminus \{0\}$ of positive integers. We let $\Phi$ denote the golden ratio ($\Phi := \frac{1 + \sqrt{5}}{2}$) and $\Phibar$ its conjugate in the quadratic field $\Q(\sqrt{5})$; that is $\Phibar := \frac{1 - \sqrt{5}}{2} = - \frac{1}{\Phi}$. Further, we let $\sg(x)$ denote the sign of a nonzero real number $x$; that is $\sg(x) = 1$ if $x > 0$ and $\sg(x) = -1$ if $x < 0$.

Let $P , Q \in \R^*$ be fixed such that $\Delta := P^2 - 4 Q > 0$ and consider $\mathscr{L}(P , Q)$ the $\R$-vectorial space of linear sequences ${(w_n)}_{n \in \Z}$ satisfying
\begin{equation}\label{eqa1}
w_{n + 2} = P w_{n + 1} - Q w_n ~~~~~~~~~~ (\forall n \in \Z) .
\end{equation}
The Lucas sequences of the first and second kind are the sequences of $\mathscr{L}(P , Q)$ corresponding respectively to the initial values $(w_0 , w_1) = (0 , 1)$ and $(w_0 , w_1) = (2 , P)$. Those sequences are respectively denoted by ${(U_n)}_n$ and ${(V_n)}_n$. Let $\alpha$ and $\beta$ be the roots of the quadratic equation: $X^2 - P X + Q = 0$ such that $|\alpha| > |\beta|$ (note that $|\alpha| \neq |\beta|$ because by hypothesis $P \neq 0$ and $P^2 - 4 Q > 0$). It is well known that for all $n \in \Z$, we have
\begin{equation}\label{eqa2}
U_n = \frac{\alpha^n - \beta^n}{\alpha - \beta} ~~\text{and}~~ V_n = \alpha^n + \beta^n .
\end{equation}
The connections and likenesses between the Lucas sequences of the first and second kind are numerous; among them, we will use the following which are easy to check
\begin{eqnarray}
V_n & = & U_{n + 1} - Q U_{n - 1} , \label{eqa3} \\
U_{2 n} & = & U_n V_n , \label{eqa4} \\
\beta^n U_m - \beta^m U_n & = & - Q^m U_{n - m} , \label{eqa5} \\
U_n U_{m + r} - U_m U_{n + r} & = & Q^m U_r U_{n - m} , \label{eqa6}
\end{eqnarray}
which hold for any $n , m , r \in \Z$. Note that if we take $(P , Q) = (1 , -1)$, we obtain for the Lucas sequence of the first kind the classical Fibonacci sequence ${(F_n)}_n$ (referenced by \seqnum{A000045} in the OEIS \cite{sloa}) and for the Lucas sequence of the second kind the classical Lucas sequence ${(L_n)}_n$ (referenced by \seqnum{A000032} in the OEIS). Further, if $(P , Q) = (2 , -1)$, we obtain the so called Pell sequences:
$$
U_n = \frac{(1 + \sqrt{2})^n - (1 - \sqrt{2})^n}{2 \sqrt{2}} ,~ V_n = (1 + \sqrt{2})^n + (1 - \sqrt{2})^n
$$
(respectively referenced by \seqnum{A000129} and \seqnum{A002203} in the OEIS). Next, if $(P , Q) = (3 , 2)$, then the sequences obtained are simply:
$$
U_n = 2^n - 1 ,~ V_n = 2^n + 1
$$
(respectively referenced by \seqnum{A000225} and \seqnum{A000051} in the OEIS).

The exact evaluation of infinite Lucas-related series is an old and fascinating subject of study with many still open questions. The particular case dealing with Fibonacci numbers (considered as the most important) has been the subject of several researches that the reader can consult in \cite{jean2, brou, ds, far, hogg, hons, luc, melh, popo, rabi}. For the general case, we just refer the reader to the papers \cite{bg} and \cite{hu} that are close enough to the present work.

This paper is devoted to generalize the results already obtained by the author in \cite{far}, which only concerns the classical Fibonacci sequence, to the larger class of the Lucas sequences of the first kind. We investigate two types of infinite Lucas-related series. The first one consists of the series of one of the two forms:

$$
\sum_{n = 1}^{+ \infty} Q^{a_n} \frac{U_{a_{n + k} - a_n}}{U_{a_n} U_{a_{n + k}}} ~~~~\text{or}~~~~ \sum_{n = 1}^{+ \infty} (-1)^n Q^{a_n} \frac{U_{a_{n + k} - a_n}}{U_{a_n} U_{a_{n + k}}} ,
$$
where $k$ is a positive integer and ${(a_n)}_n$ is a sequence of positive integers tending to infinity with $n$. The second type of series which we consider consists of the series of one of the two forms:
$$
\sum_{n = 1}^{+ \infty} (-1)^n \frac{\beta^{a_n}}{U_{a_n}} ~~~~\text{or}~~~~ \sum_{n = 1}^{+ \infty} \frac{\beta^{a_n}}{U_{a_n}} ,
$$
where ${(a_n)}_n$ is an increasing sequence of positive integers. We show in particular that if $P$ and $Q$ are integers, then some of such series can be transformed on series with rational terms.

\section{The first type of series}\label{sec2}
We begin with the following general result.
\begin{thm}\label{thm 2.1}
Let ${(a_n)}_{n \geq 1}$ be a sequence of positive integers, tending to infinity with $n$, and let $k$ be a positive integer. Then we have
\begin{equation}\label{eq 2.1}
\sum_{n = 1}^{+ \infty} Q^{a_n} \frac{U_{a_{n + k} - a_n}}{U_{a_n} U_{a_{n + k}}} = 
\sum_{n = 1}^{k} \frac{\beta^{a_n}}{U_{a_n}} .
\end{equation}
\end{thm}
To prove this theorem, we need the following lemma.
\begin{lemma}\label{lemma 2.2}
Let ${(x_n)}_{n \geq 1}$ be a convergent real sequence and let $x \in \R$ be its limit. Then for all $k \in \N$, we have
$$
\sum_{n = 1}^{+ \infty} \left(x_{n + k} - x_n\right) = k x - \sum_{n = 1}^{k} x_n .
$$
\end{lemma}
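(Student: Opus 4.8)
The plan is to prove this via telescoping, which is the natural approach for a sum of consecutive-shift differences. The key observation is that the general term $x_{n+k} - x_n$ is a difference of two copies of the same sequence, one shifted by $k$, so the partial sums should collapse.

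First I would write the $N$-th partial sum and reindex the shifted copy. That is, I set
\begin{equation*}
S_N := \sum_{n=1}^{N} \left(x_{n+k} - x_n\right) = \sum_{n=1}^{N} x_{n+k} - \sum_{n=1}^{N} x_n .
\end{equation*}
In the first sum I substitute $m = n + k$ to obtain $\sum_{m = k+1}^{N+k} x_m$. Comparing the two index ranges, the overlap $\{k+1, \dots, N\}$ cancels (assuming $N \geq k$), leaving the top block $\sum_{m=N+1}^{N+k} x_m$ from the shifted copy and the bottom block $-\sum_{n=1}^{k} x_n$ from the unshifted copy. Thus I expect
\begin{equation*}
S_N = \sum_{m=N+1}^{N+k} x_m - \sum_{n=1}^{k} x_n .
\end{equation*}

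The only remaining step is to take the limit as $N \to \infty$. Here I use the hypothesis that $x_n \to x$: each of the $k$ terms $x_{N+1}, \dots, x_{N+k}$ tends to $x$ as $N \to \infty$, so the finite sum $\sum_{m=N+1}^{N+k} x_m$ (a fixed number $k$ of terms) tends to $k x$. The second block $\sum_{n=1}^{k} x_n$ is independent of $N$, so it passes through the limit unchanged. This yields exactly $k x - \sum_{n=1}^{k} x_n$, as claimed, and simultaneously shows the series on the left converges.

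I do not anticipate a serious obstacle here; the argument is essentially bookkeeping. The one point requiring minor care is the index reindexing and the assumption $N \geq k$ so that the overlapping middle block genuinely cancels — for small $N < k$ the two ranges would not overlap, but since we only care about the limit, restricting attention to $N \geq k$ is harmless. It is also worth noting the edge case $k = 0$, where both sides are trivially $0$; the telescoping formula handles this automatically since the top block and bottom block are both empty.
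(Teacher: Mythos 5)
Your proof is correct and is essentially the same as the paper's: both compute the $N$-th partial sum, show it telescopes to $\sum_{m=N+1}^{N+k} x_m - \sum_{n=1}^{k} x_n$ (the paper via decomposing $x_{n+k}-x_n$ into unit shifts and swapping the order of summation, you via direct reindexing and cancelling the overlap), and then let $N \to \infty$. The difference is purely one of bookkeeping.
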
 
\begin{proof}
Let $k \in \N$ be fixed. For any positive integer $N$, we have
\begin{eqnarray*}
\sum_{n = 1}^{N} \left(x_{n + k} - x_n\right) & = & \sum_{n = 1}^{N} \sum_{i = 1}^{k} \left(x_{n + i} - x_{n + i - 1}\right) \\
& = & \sum_{i = 1}^{k} \sum_{n = 1}^{N} \left(x_{n + i} - x_{n + i - 1}\right) \\
& = & \sum_{i = 1}^{k} \left(x_{N + i} - x_i\right) \\
& = & \sum_{i = 1}^{k} x_{N + i} - \sum_{i = 1}^{k} x_i .
\end{eqnarray*}
The formula of the lemma immediately follows by tending $N$ to infinity.
\end{proof}
\begin{proof}[Proof of Theorem \ref{thm 2.1}]
Following an idea of Bruckman and Good \cite{bg}, let us apply Lemma \ref{lemma 2.2} for $x_n := \frac{\beta^{a_n}}{\alpha^{a_n} - \beta^{a_n}}$ ($\forall n \geq 1$), which converges to $0$ (since $|\alpha| > |\beta|$). For any $n , k \in \N$, by using \eqref{eqa2} and the fact that $\alpha \beta = Q$, we get
\begin{eqnarray*}
x_{n + k} - x_n & = & \frac{\beta^{a_{n + k}}}{\alpha^{a_{n + k}} - \beta^{a_{n + k}}} - \frac{\beta^{a_n}}{\alpha^{a_n} - \beta^{a_n}} \\[2mm]
& = & \frac{\alpha^{a_n} \beta^{a_{n + k}} - \alpha^{a_{n + k}} \beta^{a_n}}{\left(\alpha^{a_n} - \beta^{a_n}\right) \left(\alpha^{a_{n + k}} - \beta^{n + k}\right)} \\[2mm]
& = & \frac{(\alpha \beta)^{a_n} \left(\beta^{a_{n + k} - a_n} - \alpha^{a_{n + k} - a_n}\right)}{\left(\alpha^{a_n} - \beta^{a_n}\right) \left(\alpha^{a_{n + k}} - \beta^{a_{n + k}}\right)} \\[2mm]
& = & \frac{Q^{a_n} (\beta - \alpha) U_{a_{n + k} - a_n}}{(\alpha - \beta) U_{a_n} \cdot (\alpha - \beta) U_{a_{n + k}}} \\[2mm]
& = & - \frac{1}{\alpha - \beta} Q^{a_n} \frac{U_{a_{n + k} - a_n}}{U_{a_n} U_{a_{n + k}}} .
\end{eqnarray*}
So, Lemma \ref{lemma 2.2} gives
$$
\sum_{n = 1}^{+ \infty} \left(- \frac{1}{\alpha - \beta} Q^{a_n} \frac{U_{a_{n + k} - a_n}}{U_{a_n} U_{a_{n + k}}}\right) = - \sum_{n = 1}^{k} x_n = - \sum_{n = 1}^{k} \frac{\beta^{a_n}}{\alpha^{a_n} - \beta^{a_n}} .
$$
Thus
$$
\sum_{n = 1}^{+ \infty} Q^{a_n} \frac{U_{a_{n + k} - a_n}}{U_{a_n} U_{a_{n + k}}} = \sum_{n = 1}^{k} \frac{\beta^{a_n}}{U_{a_n}} ,
$$
as required. The theorem is proved.
\end{proof}

From Theorem \ref{thm 2.1}, we immediately deduce the following corollary which is already pointed out by Bruckman and Good \cite{bg} and also by Hu et al. \cite{hu}:

\begin{coll}\label{coll 2.3}
Let ${(a_n)}_{n \geq 1}$ be a sequence of positive integers, tending to infinity with $n$. Then we have
\begin{equation}\label{eq 2.2}
\sum_{n = 1}^{+ \infty} Q^{a_n} \frac{U_{a_{n + 1} - a_n}}{U_{a_n} U_{a_{n + 1}}} = \frac{\beta^{a_1}}{U_{a_1}} .
\end{equation}
\end{coll}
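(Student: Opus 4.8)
The plan is to obtain this corollary as the special case $k = 1$ of Theorem~\ref{thm 2.1}, which has already been established. Since the theorem holds for every positive integer $k$ and every sequence ${(a_n)}_{n \geq 1}$ of positive integers tending to infinity, I am free to substitute $k = 1$ directly into the identity \eqref{eq 2.1}.

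The only step requiring any attention is the evaluation of the right-hand side of \eqref{eq 2.1} at $k = 1$. There the finite sum $\sum_{n=1}^{k} \frac{\beta^{a_n}}{U_{a_n}}$ collapses, since the index $n$ ranges only over the single value $n = 1$. Thus I would write
$$
\sum_{n = 1}^{1} \frac{\beta^{a_n}}{U_{a_n}} = \frac{\beta^{a_1}}{U_{a_1}} ,
$$
and simultaneously the left-hand side of \eqref{eq 2.1} becomes $\sum_{n = 1}^{+ \infty} Q^{a_n} \frac{U_{a_{n + 1} - a_n}}{U_{a_n} U_{a_{n + 1}}}$, which is exactly the left-hand side of \eqref{eq 2.2}. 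Equating the two yields the claimed formula.

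There is essentially no obstacle to overcome here: the entire content of the corollary is already contained in Theorem~\ref{thm 2.1}, and the derivation is a pure specialization followed by the trivial observation that a sum over a one-element index set equals its single summand. The only thing one must verify is that the hypotheses of the theorem (that ${(a_n)}$ consists of positive integers tending to infinity) are precisely those assumed in the corollary, which they are, so no additional convergence argument or case analysis is needed.
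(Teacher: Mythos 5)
Your proposal is correct and matches the paper exactly: the corollary is stated there as an immediate consequence of Theorem~\ref{thm 2.1} obtained by setting $k = 1$, which is precisely your specialization argument.
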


If ${(a_n)}_{n \geq 1}$ is an arithmetic sequence of positive integers, Theorem \ref{thm 2.1} gives the following result (already pointed out by Hu et al. \cite{hu} for the case $k = 1$):

\begin{coll}\label{coll 2.4}
Let ${(a_n)}_{n \geq 1}$ be an increasing arithmetic sequence of positive integers and let $r$ be its common difference. Then for any positive integer $k$, we have
\begin{equation}\label{eq 2.3}
\sum_{n = 1}^{+ \infty} \frac{Q^{r (n - 1)}}{U_{a_n} U_{a_{n + k}}} = \frac{Q^{- a_1}}{U_{k r}} \sum_{n = 1}^{k} \frac{\beta^{a_n}}{U_{a_n}} .
\end{equation}
In particular, we have
\begin{equation}\label{eq 2.4}
\sum_{n = 1}^{+ \infty} \frac{Q^{r (n - 1)}}{U_{a_n} U_{a_{n + 1}}} = \frac{(\beta / Q)^{a_1}}{U_{a_1} U_r} .
\end{equation}
\end{coll}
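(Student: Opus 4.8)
The plan is to obtain both identities as direct specializations of Theorem \ref{thm 2.1}, exploiting the fact that an arithmetic progression makes the gaps $a_{n+k} - a_n$ independent of $n$. Since $(a_n)_{n \geq 1}$ is an increasing arithmetic sequence of positive integers with common difference $r$, I would first record that $a_n = a_1 + (n-1) r$ for every $n \geq 1$, whence $a_{n+k} - a_n = k r$ for all $n$. In particular the hypotheses of Theorem \ref{thm 2.1} are satisfied (the $a_n$ are positive integers tending to infinity, as $r \geq 1$), so I may invoke \eqref{eq 2.1} directly and need not re-examine convergence.

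Substituting into the left-hand side of \eqref{eq 2.1}, the factor $U_{a_{n+k} - a_n}$ becomes the constant $U_{k r}$, and splitting $Q^{a_n} = Q^{a_1} Q^{r(n-1)}$ lets me pull the constant $Q^{a_1} U_{k r}$ out of the summation. Thus Theorem \ref{thm 2.1} rewrites as
$$
Q^{a_1} U_{k r} \sum_{n = 1}^{+ \infty} \frac{Q^{r(n-1)}}{U_{a_n} U_{a_{n+k}}} = \sum_{n = 1}^{k} \frac{\beta^{a_n}}{U_{a_n}} .
$$
Here $U_{k r} \neq 0$, because $|\alpha| > |\beta|$ forces $\alpha^m \neq \beta^m$ and hence $U_m \neq 0$ for every $m \geq 1$; since $k r \geq 1$ this applies. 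Dividing through by $Q^{a_1} U_{k r}$ then yields \eqref{eq 2.3}.

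For the particular case \eqref{eq 2.4}, I would simply set $k = 1$ in \eqref{eq 2.3}. The finite sum on the right collapses to its single $n = 1$ term $\beta^{a_1} / U_{a_1}$, the factor $U_{k r}$ becomes $U_r$, and combining $Q^{-a_1} \beta^{a_1} = (\beta / Q)^{a_1}$ gives the stated closed form $\dfrac{(\beta / Q)^{a_1}}{U_{a_1} U_r}$.

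Since every step is a bookkeeping consequence of Theorem \ref{thm 2.1}, there is no genuine analytic obstacle here; the whole content lies in recognizing the arithmetic structure. The only points that demand a little care are verifying that $U_{k r}$ is nonzero so that the division is legitimate, and correctly tracking the exponent of $Q$ when separating $Q^{a_n}$ into $Q^{a_1} Q^{r(n-1)}$. Both are routine, so I expect the proof to be short.
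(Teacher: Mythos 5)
Your proposal is correct and follows exactly the paper's route: specialize Theorem \ref{thm 2.1} using $a_n = a_1 + (n-1)r$ so that $a_{n+k}-a_n = kr$, factor out $Q^{a_1} U_{kr}$, and then set $k=1$ for the particular case. The only difference is that you spell out the bookkeeping (including the harmless check that $U_{kr}\neq 0$) which the paper leaves implicit.
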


\begin{proof}
To obtain Formula \eqref{eq 2.3}, it suffices to apply Formula \eqref{eq 2.1} of Theorem \ref{thm 2.1} and use that $a_n = r (n - 1) + a_1$ ($\forall n \geq 1$). Then to obtain Formula \eqref{eq 2.4}, we simply set $k = 1$ in formula \eqref{eq 2.3}. This completes the proof. 
\end{proof}

Before continuing with general results, let us give some applications of the preceding results for the usual Fibonacci sequence; so, we must fix $(P , Q) = (1 , -1)$.
\begin{itemize}
\item An immediate application of Formula \eqref{eq 2.3} of Corollary \ref{coll 2.4} gives the following well-known formulas (see, for example, the survey paper of Duverney and Shiokawa \cite{ds}):
\begin{multline}\label{eqza}
\sum_{n = 1}^{+ \infty} \frac{1}{F_{2 n - 1} F_{2 n + 1}} = \frac{\sqrt{5} - 1}{2} ~~~~,~~~~ \sum_{n = 1}^{+ \infty} \frac{1}{F_{2 n} F_{2 n + 2}} = \frac{3 - \sqrt{5}}{2} ~~~~, \\
\sum_{n = 1}^{+ \infty} \frac{(-1)^{n - 1}}{F_n F_{n + 1}} = \frac{\sqrt{5} - 1}{2} ~~~~,~~~~ \sum_{n = 1}^{+ \infty} \frac{(-1)^{n - 1}}{F_n F_{n + 2}} = \sqrt{5} - 2 .~~~~~~~~
\end{multline}
\item Let $k$ be a positive integer and $a \geq 2$ be an integer. By taking in Formula \eqref{eq 2.2} of Corollary \ref{coll 2.3}: $a_n = k a^n$ ($\forall n \geq 1$), we obtain the following formula:
\begin{equation}\label{eqzb}
\sum_{n = 1}^{+ \infty} \frac{F_{(a - 1) k a^n}}{F_{k a^n} F_{k a^{n + 1}}} = \frac{1}{F_{k a} \Phi^{k a}} .
\end{equation}
By taking in addition $a = 2$ in \eqref{eqzb}, we derive the following formula:
\begin{equation}\label{eqzc}
\sum_{n = 0}^{+ \infty} \frac{1}{F_{k 2^n}} = \frac{1}{F_k} + \frac{1}{F_{2 k}} + \frac{1}{F_{2 k} \Phi^{2 k}} ,
\end{equation} 
which is already pointed out by Hoggatt and Bicknell \cite{hogg}. By taking again $k = 1$ in \eqref{eqzc}, we derive the following remarkable formula, discovered since the 1870's by Lucas \cite{luc}:
\begin{equation}\label{eqzd}
\sum_{n = 0}^{+ \infty} \frac{1}{F_{2^n}} = \frac{7 - \sqrt{5}}{2} .
\end{equation}
From Formula \eqref{eqzc}, we deduce that $\sum_{n = 0}^{+ \infty} \frac{1}{F_{k 2^n}} \in \Q(\sqrt{5})$ (for any positive integer $k$). But except the geometric sequences with common ratio $2$, we don't know any other ``regular'' sequence ${(a_n)}_{n \in \N}$ of positive integers, satisfying the property that $\sum_{n = 0}^{+ \infty} \frac{1}{F_{a_n}} \in \Q(\sqrt{5})$. More precisely, we propose the following open question:
\begin{quote}
\noindent \textbf{Open question.} \emph{Is there any linear recurrence sequence ${(a_n)}_{n \in \N}$ of positive integers, which is not a geometric sequence with common ratio $2$ and which satisfies the property that
$$
\sum_{n = 0}^{+ \infty} \frac{1}{F_{a_n}} \in \Q(\sqrt{5}) ?
$$
}
\end{quote}
Next, by taking $a = 3$ in Formula \eqref{eqzb}, we deduce (according to Formula \eqref{eqa4}) the following formula of Bruckman and Good \cite{bg}:
\begin{equation}\label{eqze}
\sum_{n = 1}^{+ \infty} \frac{L_{k 3^n}}{F_{k 3^{n + 1}}} = \frac{1}{F_{3 k} \Phi^{3 k}} .
\end{equation}
By taking $k = 1$ in Formula \eqref{eqze}, we deduce (after some calculations) the formula:
\begin{equation}\label{eqzf}
\sum_{n = 0}^{+ \infty} \frac{L_{3^n}}{F_{3^{n + 1}}} = \frac{\sqrt{5} - 1}{2}
\end{equation}
(also already pointed out by Bruckman and Good \cite{bg}).
\item By taking in Formula \eqref{eq 2.1} of Theorem \ref{thm 2.1}: $a_n = F_n$ ($\forall n \geq 1$) and $k = 1$, we obtain the following:
\begin{equation}\label{eqzg}
\sum_{n = 1}^{+ \infty} (-1)^{F_n} \frac{F_{F_{n - 1}}}{F_{F_n} F_{F_{n + 1}}} = \frac{1 - \sqrt{5}}{2} 
\end{equation}
(also already pointed out by Bruckman and Good \cite{bg}). Next, by taking in Formula \eqref{eq 2.1} of Theorem \ref{thm 2.1}: $a_n = F_n$ ($\forall n \geq 1$) and $k = 2$, we obtain the following:
\begin{equation}\label{eqzh}
\sum_{n = 1}^{+ \infty} (-1)^{F_n} \frac{F_{F_{n + 1}}}{F_{F_n} F_{F_{n + 2}}} = 1 - \sqrt{5} .
\end{equation}
\end{itemize}

Now, with the same context as Theorem \ref{thm 2.1}, the following corollary gives the sums in closed form of the series 
$$
\sum_{n = 1}^{+ \infty} (-1)^n Q^{a_n} \frac{U_{a_{n + k} - a_n}}{U_{a_n} U_{a_{n + k}}} ,
$$
when $k$ is chosen even.

\begin{coll}\label{coll b}
Let ${(a_n)}_{n \geq 1}$ be a sequence of positive integers, tending to infinity with $n$, and let $k$ be a positive integer. Then we have
\begin{equation}\label{eq 2.14}
\sum_{n = 1}^{+ \infty} (-1)^n Q^{a_n} \frac{U_{a_{n + 2 k} - a_n}}{U_{a_n} U_{a_{n + 2 k}}} = - \sum_{n = 1}^{k} Q^{a_{2 n - 1}} \frac{U_{a_{2 n} - a_{2 n - 1}}}{U_{a_{2 n}} U_{a_{2 n - 1}}} .
\end{equation}
\end{coll}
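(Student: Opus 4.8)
The plan is to exploit the fact that the index shift on the left-hand side is even, so that splitting the series according to the parity of $n$ produces two subseries, each of which is exactly of the form handled by Theorem \ref{thm 2.1}. First I would write the partial sums of even length $2M$ as
$$
\sum_{n=1}^{2M} (-1)^n Q^{a_n} \frac{U_{a_{n+2k}-a_n}}{U_{a_n} U_{a_{n+2k}}} = \sum_{m=1}^{M} Q^{a_{2m}} \frac{U_{a_{2m+2k}-a_{2m}}}{U_{a_{2m}} U_{a_{2m+2k}}} - \sum_{m=1}^{M} Q^{a_{2m-1}} \frac{U_{a_{2m+2k-1}-a_{2m-1}}}{U_{a_{2m-1}} U_{a_{2m+2k-1}}} ,
$$
collecting the even-indexed terms (where $(-1)^n = 1$) and the odd-indexed terms (where $(-1)^n = -1$) separately.

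Next I would introduce the two subsequences $b_m := a_{2m}$ and $c_m := a_{2m-1}$, both of which are sequences of positive integers tending to infinity. The key observation is that $b_{m+k} = a_{2m+2k}$ and $c_{m+k} = a_{2m+2k-1}$, so that each of the two sums above is precisely a series of the form treated in Theorem \ref{thm 2.1} with shift parameter $k$. Applying that theorem to $(b_m)$ and to $(c_m)$ would give that both subseries converge, with
$$
\sum_{m=1}^{+\infty} Q^{b_m} \frac{U_{b_{m+k}-b_m}}{U_{b_m} U_{b_{m+k}}} = \sum_{m=1}^{k} \frac{\beta^{a_{2m}}}{U_{a_{2m}}} , \qquad \sum_{m=1}^{+\infty} Q^{c_m} \frac{U_{c_{m+k}-c_m}}{U_{c_m} U_{c_{m+k}}} = \sum_{m=1}^{k} \frac{\beta^{a_{2m-1}}}{U_{a_{2m-1}}} .
$$
Since both subseries converge, the even-length partial sums converge to the difference of these two closed forms; and because the general term of the original series tends to $0$ (it equals $-(\alpha-\beta)(x_{n+2k}-x_n)$ with $x_n \to 0$, in the notation of the proof of Theorem \ref{thm 2.1}), the odd-length partial sums share the same limit, so the left-hand side equals $\sum_{m=1}^{k} \frac{\beta^{a_{2m}}}{U_{a_{2m}}} - \sum_{m=1}^{k} \frac{\beta^{a_{2m-1}}}{U_{a_{2m-1}}}$.

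Finally I would reconcile this expression with the claimed right-hand side by simplifying each paired difference $\frac{\beta^{a_{2m}}}{U_{a_{2m}}} - \frac{\beta^{a_{2m-1}}}{U_{a_{2m-1}}}$. Writing $\frac{\beta^{a_n}}{U_{a_n}} = (\alpha-\beta) x_n$ and reusing the very computation carried out in the proof of Theorem \ref{thm 2.1} — which shows, for any two indices $i < j$, that $x_j - x_i = -\frac{1}{\alpha-\beta} Q^{a_i} \frac{U_{a_j - a_i}}{U_{a_i} U_{a_j}}$ — applied with $i = 2m-1$ and $j = 2m$, I would obtain
$$
\frac{\beta^{a_{2m}}}{U_{a_{2m}}} - \frac{\beta^{a_{2m-1}}}{U_{a_{2m-1}}} = -\, Q^{a_{2m-1}} \frac{U_{a_{2m}-a_{2m-1}}}{U_{a_{2m-1}} U_{a_{2m}}} ,
$$
and summing over $m$ from $1$ to $k$ would yield exactly \eqref{eq 2.14}. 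The only genuinely delicate point is the legitimacy of separating the alternating series into its even- and odd-indexed parts; this is not automatic for a conditionally convergent series, but here it is justified because Theorem \ref{thm 2.1} guarantees that each of the two parts converges on its own, which together with the vanishing of the general term pins down the common limit of the partial sums.
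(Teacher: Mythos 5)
Your proposal is correct and follows essentially the same route as the paper: apply Theorem \ref{thm 2.1} separately to the subsequences ${(a_{2m})}_m$ and ${(a_{2m-1})}_m$, subtract, and collapse each difference $\frac{\beta^{a_{2m}}}{U_{a_{2m}}} - \frac{\beta^{a_{2m-1}}}{U_{a_{2m-1}}}$ via the identity \eqref{eqa5} (which is what your reuse of the $x_j - x_i$ computation amounts to). The only difference is that you explicitly justify recombining the two convergent subseries into the single alternating series, a step the paper passes over with ``which we can write as''; that extra care is welcome but does not change the argument.
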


\begin{proof}
By applying Formula \eqref{eq 2.1} of Theorem \ref{thm 2.1} for the sequence ${(a_{2 n})}_{n \geq 1}$ (instead of ${(a_n)}_{n \geq 1}$), we obtain
\begin{equation}\label{eq 2.16}
\sum_{n = 1}^{+ \infty} Q^{a_{2 n}} \frac{U_{a_{2 n + 2 k} - a_{2 n}}}{U_{a_{2 n}} U_{a_{2 n + 2 k}}} = \sum_{n = 1}^{k} \frac{\beta^{a_{2 n}}}{U_{a_{2 n}}} .
\end{equation}
Next, by applying Formula \eqref{eq 2.1} of Theorem \ref{thm 2.1} for the sequence ${(a_{2 n - 1})}_{n \geq 1}$ (instead of ${(a_n)}_{n \geq 1}$), we obtain
\begin{equation}\label{eq 2.17}
\sum_{n = 1}^{+ \infty} Q^{a_{2 n - 1}} \frac{U_{a_{2 n + 2 k - 1} - a_{2 n - 1}}}{U_{a_{2 n - 1}} U_{a_{2 n + 2 k - 1}}} = \sum_{n = 1}^{k} \frac{\beta^{a_{2 n - 1}}}{U_{a_{2 n - 1}}} .
\end{equation}
Now, by subtracting \eqref{eq 2.17} from \eqref{eq 2.16}, we get
\begin{equation*}
\sum_{n = 1}^{+ \infty} \left(Q^{a_{2 n}} \frac{U_{a_{2 n + 2 k} - a_{2 n}}}{U_{a_{2 n}} U_{a_{2 n + 2 k}}} - Q^{a_{2 n - 1}} \frac{U_{a_{2 n + 2 k - 1} - a_{2 n - 1}}}{U_{a_{2 n - 1}} U_{a_{2 n + 2 k - 1}}}\right) = \sum_{n = 1}^{k} \left(\frac{\beta^{a_{2 n}}}{U_{a_{2 n}}} - \frac{\beta^{a_{2 n - 1}}}{U_{a_{2 n - 1}}}\right) , 
\end{equation*}
which we can write as:
$$
\sum_{n = 1}^{+ \infty} (-1)^n Q^{a_n} \frac{U_{a_{n + 2 k} - a_n}}{U_{a_n} U_{a_{n + 2 k}}} = \sum_{n = 1}^{k} \frac{\beta^{a_{2 n}} U_{a_{2 n - 1}} - \beta^{a_{2 n - 1}} U_{a_{2 n}}}{U_{a_{2 n}} U_{a_{2 n - 1}}} .
$$
Finally, since for any $n \in \Z$, we have $\beta^{a_{2 n}} U_{a_{2 n - 1}} - \beta^{a_{2 n - 1}} U_{a_{2 n}} = - Q^{a_{2 n - 1}} U_{a_{2 n} - a_{2 n - 1}}$ (according to Formula \eqref{eqa5}), we conclude that:
$$
\sum_{n = 1}^{+ \infty} (-1)^n Q^{a_n} \frac{U_{a_{n + 2 k} - a_n}}{U_{a_n} U_{a_{n + 2 k}}} = - \sum_{n = 1}^{k} Q^{a_{2 n - 1}} \frac{U_{a_{2 n} - a_{2 n - 1}}}{U_{a_{2 n}} U_{a_{2 n - 1}}} ,
$$
as required. The proof is achieved.
\end{proof}

If ${(a_n)}_{n \geq 1}$ is an arithmetic sequence of positive integers then Corollary \ref{coll b} reduces to the following corollary:


\begin{coll}\label{coll c}
Let ${(a_n)}_{n \geq 1}$ be an increasing arithmetic sequence of positive integers and let $r$ be its common difference. Then for any positive integer $k$, we have
\begin{equation}\label{eq 2.18}
\sum_{n = 1}^{+ \infty} \frac{(-1)^{n - 1} Q^{r (n - 1)}}{U_{a_n} U_{a_{n + 2 k}}} = \frac{U_r}{U_{2 k r}} \sum_{n = 1}^{k} \frac{Q^{2 r (n - 1)}}{U_{a_{2 n}} U_{a_{2 n - 1}}} .
\end{equation}
In particular, we have
\begin{equation}\label{eq e}
\sum_{n = 1}^{+ \infty} \frac{(-1)^{n - 1} Q^{r (n - 1)}}{U_{a_n} U_{a_{n + 2}}} = \frac{1}{U_{a_1} U_{a_2} V_r} .
\end{equation}
\end{coll}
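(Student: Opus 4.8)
The plan is to obtain both formulas directly from Corollary~\ref{coll b} by feeding the arithmetic structure of $(a_n)$ into Formula~\eqref{eq 2.14}. Since the sequence is arithmetic with common difference $r$, I would first record the explicit form $a_n = a_1 + r(n-1)$ together with the two index identities $a_{n+2k} - a_n = 2kr$ (valid for every $n$) and $a_{2n} - a_{2n-1} = r$. These give the factorizations $Q^{a_n} = Q^{a_1} Q^{r(n-1)}$ and $Q^{a_{2n-1}} = Q^{a_1} Q^{2r(n-1)}$, which are what make the powers of $Q$ separate cleanly from the summation indices.

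Next I would substitute these into Formula~\eqref{eq 2.14}. On the left-hand side the argument $U_{a_{n+2k}-a_n}$ collapses to the constant $U_{2kr}$, so the whole factor $Q^{a_1} U_{2kr}$ can be pulled outside, leaving $Q^{a_1} U_{2kr} \sum_{n\geq 1} (-1)^n Q^{r(n-1)} / (U_{a_n} U_{a_{n+2k}})$. On the right-hand side the argument $U_{a_{2n}-a_{2n-1}}$ becomes the constant $U_r$, and the factor $Q^{a_1} U_r$ likewise comes out, producing $-Q^{a_1} U_r \sum_{n=1}^{k} Q^{2r(n-1)} / (U_{a_{2n}} U_{a_{2n-1}})$.

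I would then equate the two sides and divide through by $Q^{a_1} U_{2kr}$; this is legitimate because $Q \in \R^*$ and because $r \geq 1$ (the sequence is increasing) forces $2kr > 0$, whence $U_{2kr} \neq 0$ under the standing hypothesis $|\alpha| > |\beta|$. Multiplying the resulting identity by $-1$ converts $(-1)^n$ into $(-1)^{n-1}$ and cancels the minus sign on the right, yielding exactly Formula~\eqref{eq 2.18}. For the particular case~\eqref{eq e} I would set $k=1$, so the right-hand sum reduces to its single term $Q^{0}/(U_{a_2} U_{a_1})$, and then invoke the duplication identity~\eqref{eqa4} in the form $U_{2r} = U_r V_r$ to simplify the prefactor $U_r / U_{2r}$ to $1/V_r$, giving $1/(U_{a_1} U_{a_2} V_r)$.

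Since every step is a routine substitution into an already established identity, there is no genuine obstacle here; the only points demanding attention are the sign bookkeeping in passing from $(-1)^n$ to $(-1)^{n-1}$ and the confirmation that the quantities divided out ($Q$, $U_{2kr}$, and $V_r$) are nonzero, all of which are immediate from the paper's standing hypotheses.
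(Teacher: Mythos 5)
Your proposal is correct and matches the paper's own argument, which likewise derives Formula \eqref{eq 2.18} by substituting $a_n = a_1 + r(n-1)$ into Formula \eqref{eq 2.14} of Corollary \ref{coll b} and then obtains \eqref{eq e} by setting $k=1$ and invoking \eqref{eqa4}. The paper states this in one line; you have simply written out the same routine substitutions and sign bookkeeping in full.
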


\begin{proof}
To establish Formula \eqref{eq 2.18}, it suffices to apply Corollary \ref{coll b} together with the formula $u_n = r (n - 1) + u_1$ ($\forall n \geq 1$). To establish Formula \eqref{eq e}, we take $k = 1$ in \eqref{eq 2.18} and we use in addition Formula \eqref{eqa4}. 
\end{proof}

\begin{remark}
Let ${(a_n)}_{n \in \N}$ be an increasing arithmetic sequence of natural numbers and let $r$ be its common difference. By Corollary \ref{coll 2.4}, we know a closed form of the sum 
$$
\sum_{n = 1}^{+ \infty} \frac{Q^{r (n - 1)}}{U_{a_n} U_{a_{n + k}}} ~~~~~~~~~~ (k \in \N^*)
$$
and by Corollary \ref{coll c}, we know a closed form of the sum 
\begin{equation}\label{eqaaa}
\sum_{n = 1}^{+ \infty} \frac{(-1)^{n - 1} Q^{r (n - 1)}}{U_{a_n} U_{a_{n + k}}}
\end{equation}
when $k$ is an even positive integer. But if $k$ is an odd positive integer, the closed form of the sum \eqref{eqaaa} is still unknown even in the particular case ``$a_n = n$''. However, we shall prove in what follows that if $a_0 = 0$, there is a relationship between the sums \eqref{eqaaa}, where $k$ lies in the set of the odd positive integers.
\end{remark}

We have the following:

\begin{thm}\label{thm a}
Let
$$
S_{r , k} := \sum_{n = 1}^{+ \infty} \frac{(-1)^{n - 1} Q^{r (n - 1)}}{U_{r n} U_{r (n + k)}} ~~~~~~~~~~ (\forall r , k \in \N^*) .
$$
Then for any positive integer $r$ and any odd positive integer $k$, we have
\begin{equation}\label{eq d}
S_{r , k} = \frac{U_r}{U_{r k}} \left(S_{r , 1} + Q^r \sum_{n = 1}^{(k - 1)/2} \frac{Q^{2 r (n - 1)}}{U_{2 n r} U_{(2 n + 1) r}}\right) .
\end{equation}
\end{thm}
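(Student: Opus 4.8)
The plan is to prove \eqref{eq d} by induction on the odd positive integer $k$, decreasing $k$ by $2$ at each step (so every value encountered stays odd and positive). The base case $k = 1$ is immediate: the inner sum in \eqref{eq d} is empty and the prefactor $U_r/U_{rk}$ equals $1$, so \eqref{eq d} reads $S_{r,1} = S_{r,1}$. For the inductive step, set
\[
T_k := S_{r,1} + Q^r \sum_{n=1}^{(k-1)/2} \frac{Q^{2r(n-1)}}{U_{2nr}\,U_{(2n+1)r}},
\]
so that \eqref{eq d} is precisely the assertion $U_{rk}\,S_{r,k} = U_r\,T_k$. Inspecting the last term of the defining sum shows that $T_k = T_{k-2} + \dfrac{Q^{r(k-2)}}{U_{(k-1)r}\,U_{rk}}$. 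Since the inductive hypothesis for $k-2$ reads $U_{r(k-2)}\,S_{r,k-2} = U_r\,T_{k-2}$, it suffices to establish the recurrence
\[
U_{rk}\,S_{r,k} - U_{r(k-2)}\,S_{r,k-2} = \frac{U_r\,Q^{r(k-2)}}{U_{(k-1)r}\,U_{rk}} ;
\]
for then, adding $U_{r(k-2)}\,S_{r,k-2} = U_r\,T_{k-2}$ to both sides and recalling the value of $T_k - T_{k-2}$, we obtain $U_{rk}\,S_{r,k} = U_r\,T_k$, as wanted.

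The heart of the argument is to compute the left-hand side of this recurrence. I would combine the two series term by term from the definition of $S_{r,k}$; after pulling out the common factor $(-1)^{n-1}Q^{r(n-1)}/U_{rn}$, the $n$-th term is governed by
\[
\frac{U_{rk}}{U_{r(n+k)}} - \frac{U_{r(k-2)}}{U_{r(n+k-2)}} = \frac{U_{rk}\,U_{r(n+k-2)} - U_{r(k-2)}\,U_{r(n+k)}}{U_{r(n+k)}\,U_{r(n+k-2)}} .
\]
The numerator is exactly an instance of \eqref{eqa6}: replacing $(n,m,r)$ there by $(rk,\,r(k-2),\,rn)$ gives
\[
U_{rk}\,U_{r(n+k-2)} - U_{r(k-2)}\,U_{r(n+k)} = Q^{r(k-2)}\,U_{2r}\,U_{rn} .
\]
The resulting factor $U_{rn}$ cancels the one pulled out, and we are left with
\[
U_{rk}\,S_{r,k} - U_{r(k-2)}\,S_{r,k-2} = Q^{r(k-2)}\,U_{2r} \sum_{n=1}^{+\infty} \frac{(-1)^{n-1}\,Q^{r(n-1)}}{U_{r(n+k-2)}\,U_{r(n+k)}} .
\]

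It then remains to evaluate the residual series, which is an alternating sum of \emph{even} gap $2$ and hence falls under Corollary \ref{coll c}. Applying Formula \eqref{eq e} to the increasing arithmetic sequence $b_n := r(n+k-2)$ (common difference $r$, with $b_1 = r(k-1)$ and $b_2 = rk$) yields
\[
\sum_{n=1}^{+\infty} \frac{(-1)^{n-1}\,Q^{r(n-1)}}{U_{r(n+k-2)}\,U_{r(n+k)}} = \frac{1}{U_{r(k-1)}\,U_{rk}\,V_r} .
\]
Inserting this and using $U_{2r} = U_r\,V_r$ from \eqref{eqa4}, the factors $V_r$ cancel and the right-hand side collapses to $\dfrac{U_r\,Q^{r(k-2)}}{U_{r(k-1)}\,U_{rk}}$, which is exactly the recurrence required above. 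This closes the induction and proves \eqref{eq d}.

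I expect the main obstacle to be spotting the correct linear combination $U_{rk}\,S_{r,k} - U_{r(k-2)}\,S_{r,k-2}$ together with the precise instance of \eqref{eqa6} that makes the numerators factor through $U_{rn}$; once this is done, the residual series has even gap and is closed instantly by Corollary \ref{coll c}, while the final simplification through $U_{2r} = U_r V_r$ is routine. For completeness one should also record that each $S_{r,k}$ converges geometrically, since $|Q| = |\alpha\beta| < |\alpha|^2$ ensures $U_{rn} \neq 0$ and a geometric decay of the general term, which legitimizes the term-by-term rearrangements.
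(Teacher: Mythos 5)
Your proof is correct, but it takes a genuinely different route from the paper's. The paper proves \eqref{eq d} in a single step: it forms the combination $S_{r,1} - \frac{U_{rk}}{U_r}\, S_{r,k}$, applies \eqref{eqa6} to the triplet $(rk,\, r,\, rn)$ so that the $n$-th numerator collapses to $Q^r U_{rn} U_{r(k-1)}$, and is left with one residual alternating series of even gap $k-1$, which it evaluates with the general formula \eqref{eq 2.18} of Corollary~\ref{coll c}. You instead induct on the odd integer $k$ in steps of $2$, comparing $U_{rk} S_{r,k}$ with $U_{r(k-2)} S_{r,k-2}$ via the instance $(rk,\, r(k-2),\, rn)$ of \eqref{eqa6}; your residual series always has gap $2$, so you only ever need the special case \eqref{eq e} (together with $U_{2r} = U_r V_r$ from \eqref{eqa4}) rather than the full \eqref{eq 2.18}, and the finite sum in \eqref{eq d} emerges by telescoping the increments $T_k - T_{k-2}$ rather than being produced all at once. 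The trade-off is clear: the paper's argument is shorter and identifies the closed form directly, while yours minimizes the external input at the cost of the induction scaffolding. All your individual steps check out --- the instance of \eqref{eqa6} does yield $Q^{r(k-2)} U_{2r} U_{rn}$, the shifted sequence $b_n = r(n+k-2)$ is a legitimate input to \eqref{eq e}, and the increment $U_r(T_k - T_{k-2}) = U_r Q^{r(k-2)}/\bigl(U_{(k-1)r} U_{rk}\bigr)$ matches your recurrence --- and your closing remark on convergence ($U_{rn} \neq 0$ and geometric decay since $|\beta| < |\alpha|$) justifies the term-by-term manipulations.
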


\begin{proof}
Let $r$ and $k$ be positive integers and suppose that $k$ is odd. Because the formula of the theorem is trivial for $k = 1$, we can assume that $k \geq 3$. We have
\begin{eqnarray*}
S_{r , 1} - \frac{U_{r k}}{U_r} S_{r , k} & = & \sum_{n = 1}^{+ \infty} \frac{(-1)^{n - 1} Q^{r (n - 1)}}{U_{r n} U_{r (n + 1)}} - \frac{U_{r k}}{U_r} \sum_{n = 1}^{+ \infty} \frac{(-1)^{n - 1} Q^{r (n - 1)}}{U_{r n} U_{r (n + k)}} \\
& = & \sum_{n = 1}^{+ \infty} (-1)^n Q^{r (n - 1)} \frac{U_{r k} U_{r (n + 1)} - U_r U_{r (n + k)}}{U_r U_{r n} U_{r (n + 1)} U_{r (n + k)}} .
\end{eqnarray*}
But according to Formula \eqref{eqa6} \big(applied to the triplet $(rk , r , rn)$ instead of $(n , m , r)$\big), we have
$$
U_{r k} U_{r (n + 1)} - U_r U_{r (n + k)} = Q^r U_{r n} U_{r (k - 1)} .
$$
Using this, it follows that:
\begin{eqnarray*}
S_{r , 1} - \frac{U_{r k}}{U_r} S_{r , k} & = & \frac{U_{r (k - 1)}}{U_r} \sum_{n = 1}^{+ \infty} \frac{(-1)^n Q^{r n}}{U_{r (n + 1)} U_{r (n + k)}} \\
& = & - Q^r \frac{U_{r (k - 1)}}{U_r} \sum_{n = 1}^{+ \infty} \frac{(-1)^{n - 1} Q^{r (n - 1)}}{U_{b_n} U_{b_{n + k - 1}}} ,
\end{eqnarray*}
where we have put $b_n := r (n + 1)$ ($\forall n \geq 1$). Next, because $(k - 1)$ is even (since $k$ is supposed odd), it follows by Corollary \ref{coll c} that:
\begin{eqnarray*}
S_{r , 1} - \frac{U_{r k}}{U_r} S_{r , k} & = & - Q^r \frac{U_{r (k - 1)}}{U_r} \times \frac{U_r}{U_{(k - 1) r}} \sum_{n = 1}^{(k - 1)/2} \frac{Q^{2 r (n - 1)}}{U_{b_{2 n}} U_{b_{2 n - 1}}} \\
& = & - Q^r \sum_{n = 1}^{(k - 1)/2} \frac{Q^{2 r (n - 1)}}{U_{2 n r} U_{(2 n + 1) r}} .
\end{eqnarray*}
The formula of the theorem follows.
\end{proof}

\begin{remark}
The particular case of Formula \eqref{eq d} corresponding to $(P , Q) = (1 , -1)$ and $r = 1$ is already established by Rabinowitz \cite{rabi}.
\end{remark}
\begin{remark}
In \cite{hu}, Hu et al. established an expression of $S_{r , 1}$ in terms of the values of the Lambert series; and before them, Jeannin \cite{jean2} obtained the same expression in the particular case when $Q = -1$ and $r$ is odd.
\end{remark}

\section{The second type of series}\label{sec3}

We begin this section by dealing with series of the form $\sum_{n \geq 1} (-1)^n \beta^{a_n} / U_{a_n}$, where ${(a_n)}_n$ is an increasing sequence of positive integers. By grouping terms, we transform such series to another type of series whose terms are rational numbers when $P , Q \in \Z$. As we will specify later, some results of this section can be deduced from the results of the previous one by tending the parameter $k$ to infinity. We have the following:

\begin{thm}\label{thm 3.1}
Let ${(a_n)}_{n \geq 1}$ be an increasing sequence of positive integers. Then we have
\begin{equation}\label{eq 3.1}
\sum_{n = 1}^{+ \infty} (-1)^n \frac{\beta^{a_n}}{U_{a_n}} = - \sum_{n = 1}^{+ \infty} Q^{a_{2 n - 1}} \frac{U_{a_{2 n} - a_{2 n - 1}}}{U_{a_{2 n}} U_{a_{2 n - 1}}} .
\end{equation}
\end{thm}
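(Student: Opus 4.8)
The plan is to group the terms of the left-hand series in consecutive pairs and then simplify each pair by means of the identity \eqref{eqa5}, exactly as was done for finite $k$ in the proof of Corollary \ref{coll b} (indeed \eqref{eq 3.1} may be viewed as the ``$k \to \infty$'' limit of \eqref{eq 2.14}). Before grouping I would first settle convergence. Using \eqref{eqa2} and writing $q := |\beta / \alpha| < 1$, we have
$$
\frac{\beta^{a_n}}{U_{a_n}} = (\alpha - \beta)\frac{(\beta/\alpha)^{a_n}}{1 - (\beta/\alpha)^{a_n}} ,
$$
and since ${(a_n)}_{n \geq 1}$ is strictly increasing we get $a_n \geq a_1 + (n - 1)$, so $\left|\beta^{a_n}/U_{a_n}\right| \leq C q^{n}$ for some constant $C$ and all large $n$. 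Hence the series $\sum_{n \geq 1} (-1)^n \beta^{a_n}/U_{a_n}$ converges absolutely; in particular its terms may be grouped in consecutive pairs without affecting the sum.

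Next I would perform the grouping. Pairing the term of index $2\ell - 1$ with the term of index $2\ell$ (for $\ell = 1, 2, \dots$) and noting that $(-1)^{2\ell} = 1$ while $(-1)^{2\ell - 1} = -1$, the left-hand side becomes
$$
\sum_{\ell = 1}^{+\infty}\left(\frac{\beta^{a_{2\ell}}}{U_{a_{2\ell}}} - \frac{\beta^{a_{2\ell - 1}}}{U_{a_{2\ell - 1}}}\right) = \sum_{\ell = 1}^{+\infty}\frac{\beta^{a_{2\ell}} U_{a_{2\ell - 1}} - \beta^{a_{2\ell - 1}} U_{a_{2\ell}}}{U_{a_{2\ell}} U_{a_{2\ell - 1}}} .
$$
The key step is to recognize the numerator as an instance of \eqref{eqa5}: applying that identity with its $n$ taken to be $a_{2\ell}$ and its $m$ taken to be $a_{2\ell - 1}$ gives $\beta^{a_{2\ell}} U_{a_{2\ell - 1}} - \beta^{a_{2\ell - 1}} U_{a_{2\ell}} = - Q^{a_{2\ell - 1}} U_{a_{2\ell} - a_{2\ell - 1}}$. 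Substituting this into the sum above yields precisely the right-hand side of \eqref{eq 3.1}, completing the argument.

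I do not anticipate a genuine obstacle here; the computation is short and the only points demanding care are the legitimacy of grouping — handled by the absolute-convergence estimate above — and the correct matching of indices when invoking \eqref{eqa5}, where interchanging the roles of $n$ and $m$ would reverse the sign. Should one prefer to avoid an a priori convergence discussion, an equally valid route is to work with the even partial sums: pairing directly gives $\sum_{n = 1}^{2N} (-1)^n \beta^{a_n}/U_{a_n} = - \sum_{\ell = 1}^{N} Q^{a_{2\ell - 1}} U_{a_{2\ell} - a_{2\ell - 1}}/(U_{a_{2\ell}} U_{a_{2\ell - 1}})$, after which letting $N \to \infty$ and using that $\beta^{a_{2N+1}}/U_{a_{2N+1}} \to 0$ (so the odd partial sums share the same limit) establishes convergence of the full series to the claimed value.
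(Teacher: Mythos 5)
Your proposal is correct and follows essentially the same route as the paper: pair the terms of indices $2\ell-1$ and $2\ell$, combine over a common denominator, and apply \eqref{eqa5} with $n = a_{2\ell}$, $m = a_{2\ell-1}$. The only difference is that you make the convergence justification explicit (the paper disposes of it in one sentence by invoking the increase of ${(a_n)}_n$), and your alternative via even partial sums matches the paper's remark that \eqref{eq 3.1} is the $k \to \infty$ limit of \eqref{eq 2.14}.
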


\begin{proof}
The increase of ${(a_n)}_n$ ensures the convergence of the two series in \eqref{eq 3.1}. By grouping terms, we have
\begin{eqnarray*}
\sum_{n = 1}^{+ \infty} (-1)^n \frac{\beta^{a_n}}{U_{a_n}} & = & \sum_{n = 1}^{+ \infty} \left((-1)^{2 n} \frac{\beta^{a_{2 n}}}{U_{a_{2 n}}} + (-1)^{2 n - 1} \frac{\beta^{a_{2 n - 1}}}{U_{a_{2 n - 1}}}\right) \\
& = & \sum_{n = 1}^{+ \infty} \left(\frac{\beta^{a_{2 n}}}{U_{a_{2 n}}} - \frac{\beta^{a_{2 n - 1}}}{U_{a_{2 n - 1}}}\right) \\
& = & \sum_{n = 1}^{+ \infty} \frac{\beta^{a_{2 n}} U_{a_{2 n - 1}} - \beta^{a_{2 n - 1}} U_{a_{2 n}}}{U_{a_{2 n}} U_{a_{2 n - 1}}} \\
& = & \sum_{n = 1}^{+ \infty} \frac{- Q^{a_{2 n - 1}} U_{a_{2 n} - a_{2 n - 1}}}{U_{a_{2 n}} U_{a_{2 n - 1}}} ~~~~~~~~~~ \text{(according to Formula \eqref{eqa5})} ,
\end{eqnarray*}
as required. This achieves the proof of the theorem.
\end{proof}

\begin{remark}
We can also prove Theorem \ref{thm 3.1} by tending $k$ to infinity in Formulas \eqref{eq 2.14} of Corollary \ref{coll b}. To do so, we must previously remark that for any positive integer $n$, we have
$$
\lim_{k \rightarrow + \infty} \frac{U_{a_{n + 2 k} - a_n}}{U_{a_{n + 2 k}}} = \alpha^{- a_n}
$$
(according to \eqref{eqa2}) and that the series of functions 
$$
\sum_{n = 1}^{+ \infty} (-1)^n Q^{a_n} \frac{U_{a_{n + 2 k} - a_n}}{U_{a_n} U_{a_{n + 2 k}}}
$$
(from $\N^*$ to $\R$) converges uniformly on $\N^*$, as we can see for example by observing that:
$$
\left\vert \frac{U_{a_{n + 2 k} - a_n}}{U_{a_{n + 2 k}}}\right\vert = \left\vert \frac{\alpha^{a_{n + 2 k} - a_n}}{\alpha^{a_{n + 2 k}}} \times \frac{1 - \left(\frac{\beta}{\alpha}\right)^{a_{n + 2 k} - a_n}}{1 - \left(\frac{\beta}{\alpha}\right)^{a_{n + 2 k}}}\right\vert \leq \frac{2}{1 - \left\vert \frac{\beta}{\alpha}\right\vert} \alpha^{- a_n} ~~~~~~~~~~ (\forall n , k \in \N^*) .
$$
\end{remark}

According to the closed-form formulas of $U_n$ and $V_n$ (see \eqref{eqa2}), it is immediate that $\lim_{n \rightarrow + \infty} \frac{V_n}{U_n} = \alpha - \beta = \sg(P) \sqrt{\Delta}$. So the approximations $\sqrt{\Delta} \simeq \sg(P) \frac{V_n}{U_n}$ $(n \geq 1)$ are increasingly better when $n$ increases. When supposing $P > 0$ and $Q < 0$, we derive from Theorem \ref{thm 3.1} a curious formula in which the sum of the errors of the all approximations $\sqrt{\Delta} \simeq \sg(P) \frac{V_n}{U_n} = \frac{V_n}{U_n}$ ($n \geq 1$) is transformed to a series whose terms are rational numbers when $P , Q \in \Z$. We have the following:

\begin{coll}\label{coll 3.2}
Suppose that $P > 0$ and $Q < 0$. Then we have
\begin{equation}\label{eq 3.3}
\sum_{n = 1}^{+ \infty} \left|\sqrt{\Delta} - \frac{V_n}{U_n}\right| = 2 \sum_{n = 1}^{+ \infty} \frac{{|\beta|}^n}{U_n} = 2 \sum_{n = 1}^{+ \infty} \frac{{|Q|}^{2 n - 1}}{U_{2 n} U_{2 n - 1}} .
\end{equation}
\end{coll}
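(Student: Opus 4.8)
The plan is to reduce everything to the closed forms \eqref{eqa2} and then invoke Theorem \ref{thm 3.1} with the special sequence $a_n = n$. First I would pin down the signs of $\alpha$ and $\beta$ under the hypotheses $P > 0$ and $Q < 0$. Since $\alpha\beta = Q < 0$, the two roots have opposite signs; combined with $\alpha + \beta = P > 0$ and $|\alpha| > |\beta|$, this forces $\alpha > 0 > \beta$. Moreover $\Delta = P^2 - 4Q > P^2$ gives $\sqrt{\Delta} > P > 0$, so that $\alpha - \beta = \sqrt{\Delta}$ and $\sg(P) = 1$ (so the $\sg(P)$ factor from the discussion preceding the statement is simply $1$ here).

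Next I would compute the general error term explicitly. Writing $V_n = \alpha^n + \beta^n$ and $U_n = (\alpha^n - \beta^n)/(\alpha - \beta)$, a short calculation gives
\begin{equation*}
\sqrt{\Delta} - \frac{V_n}{U_n} = (\alpha - \beta)\left(1 - \frac{\alpha^n + \beta^n}{\alpha^n - \beta^n}\right) = (\alpha - \beta)\cdot\frac{-2\beta^n}{\alpha^n - \beta^n} = -2\,\frac{\beta^n}{U_n} .
\end{equation*}
It then remains to take absolute values. Here I would record the easy fact that $U_n > 0$ for every $n \geq 1$: since $\alpha > |\beta|$, the numerator $\alpha^n - \beta^n$ is positive whether $\beta^n$ is positive or negative, and $\alpha - \beta > 0$. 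Hence $\left|\sqrt{\Delta} - V_n/U_n\right| = 2|\beta|^n/U_n$, and summing over $n$ yields the first equality of \eqref{eq 3.3}.

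For the second equality, I would apply Theorem \ref{thm 3.1} to the sequence $a_n = n$. Since $a_{2n} - a_{2n-1} = 1$ and $U_1 = 1$, the right-hand side of \eqref{eq 3.1} collapses, giving
\begin{equation*}
\sum_{n=1}^{+\infty}(-1)^n\frac{\beta^n}{U_n} = -\sum_{n=1}^{+\infty}\frac{Q^{2n-1}}{U_{2n}U_{2n-1}} .
\end{equation*}
The final step is purely a matter of resolving signs, and it is the only place demanding attention. Because $\beta < 0$ we have $(-1)^n\beta^n = |\beta|^n$, so the left-hand side above equals $\sum_{n\geq 1}|\beta|^n/U_n$; and because $Q < 0$ the odd power satisfies $Q^{2n-1} = -|Q|^{2n-1}$, which flips the sign on the right. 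Combining these two observations turns the displayed identity into $\sum_{n\geq 1}|\beta|^n/U_n = \sum_{n\geq 1}|Q|^{2n-1}/(U_{2n}U_{2n-1})$, and multiplying by $2$ completes \eqref{eq 3.3}. There is no genuine obstacle in the argument; the one delicate point is the consistent bookkeeping of the signs of $\alpha$, $\beta$, $U_n$, and $Q^{2n-1}$, which is precisely what makes the absolute values and the alternating factor $(-1)^n$ cancel cleanly.
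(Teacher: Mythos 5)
Your proposal is correct and follows essentially the same route as the paper: determine the signs of $\alpha$, $\beta$, and $U_n$ from the hypotheses $P>0$, $Q<0$, compute the error $\sqrt{\Delta}-V_n/U_n=-2\beta^n/U_n$ directly from the closed forms \eqref{eqa2}, and obtain the second equality by applying Theorem \ref{thm 3.1} with $a_n=n$. The only (harmless) differences are that you deduce the signs from Vieta's relations rather than from the explicit formulas for $\alpha$ and $\beta$, and that you spell out the sign bookkeeping in the last step, which the paper leaves implicit.
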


\begin{proof}
From the hypothesis $P > 0$ and $Q < 0$, we deduce that $\alpha = \frac{P + \sqrt{\Delta}}{2}$ and $\beta = \frac{P - \sqrt{\Delta}}{2}$ (since $|\alpha| > |\beta|$). Thus $\alpha > 0$, $\beta < 0$ and $\alpha - \beta = \sqrt{\Delta}$. In addition, ($P > 0$ and $Q < 0$) implies that $U_n > 0$ ($\forall n \in \N^*$). Using all these facts together with \eqref{eqa2}, we have for any positive integer $n$:
\begin{multline*}
\left\vert\sqrt{\Delta} - \frac{V_n}{U_n}\right\vert = \left\vert (\alpha - \beta) - \frac{V_n}{U_n}\right\vert = \frac{\left\vert (\alpha - \beta) U_n - V_n\right\vert}{U_n} = \frac{\left\vert(\alpha^n - \beta^n) - (\alpha^n + \beta^n)\right\vert}{U_n} = 2 \frac{{|\beta|}^n}{U_n} ,
\end{multline*}
which gives the first equality of \eqref{eq 3.3}.

The second equality of \eqref{eq 3.3} follows from Theorem \ref{thm 3.1} by taking $a_n = n$ ($\forall n \geq 1$). The proof is complete.
\end{proof}

Before continuing with general results, we will give some important applications of the two preceding results for the usual Fibonacci and Lucas sequences. By taking in Corollary \ref{coll 3.2} $(P , Q) = (1 , -1)$, which corresponds to $(U_n , V_n) = (F_n , L_n)$, we immediately deduce the following:
\begin{coll}\label{coll d}
We have
$$
\sum_{n = 1}^{+ \infty} \left\vert\sqrt{5} - \frac{L_n}{F_n}\right\vert = 2 \sum_{n = 1}^{+ \infty} \frac{1}{F_n \Phi^n} = 2 \sum_{n = 1}^{+ \infty} \frac{1}{F_{2 n} F_{2 n - 1}} .
$$
\end{coll}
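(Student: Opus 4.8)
The plan is to obtain Corollary \ref{coll d} as a direct specialization of Corollary \ref{coll 3.2} to the parameter values $(P , Q) = (1 , -1)$, so that essentially no new argument is required beyond checking that the hypotheses apply and computing the relevant constants. First I would verify that the hypothesis of Corollary \ref{coll 3.2} is met: for $(P , Q) = (1 , -1)$ we have $P = 1 > 0$ and $Q = -1 < 0$, exactly as required. Under this choice the discriminant is $\Delta = P^2 - 4 Q = 1 + 4 = 5$, so $\sqrt{\Delta} = \sqrt{5}$, and the Lucas sequences of the first and second kind become the Fibonacci and Lucas sequences, namely $U_n = F_n$ and $V_n = L_n$. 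Hence the left-hand side of \eqref{eq 3.3} is precisely $\sum_{n \geq 1} \left\vert \sqrt{5} - L_n / F_n \right\vert$, matching the left-hand member of the asserted identity.

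Next I would evaluate the two constants occurring on the right-hand side of \eqref{eq 3.3}. From the expression $\beta = \frac{P - \sqrt{\Delta}}{2}$ obtained inside the proof of Corollary \ref{coll 3.2}, we get $\beta = \frac{1 - \sqrt{5}}{2} = \Phibar = -\frac{1}{\Phi}$, so that $|\beta| = 1/\Phi$ and therefore $|\beta|^n = \Phi^{-n}$; this converts the first sum $2 \sum_{n \geq 1} |\beta|^n / U_n$ into $2 \sum_{n \geq 1} 1/(F_n \Phi^n)$. Similarly $|Q| = 1$, so $|Q|^{2 n - 1} = 1$ for every $n$, and the second sum $2 \sum_{n \geq 1} |Q|^{2 n - 1}/(U_{2 n} U_{2 n - 1})$ collapses to $2 \sum_{n \geq 1} 1/(F_{2 n} F_{2 n - 1})$. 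Substituting these three simplified expressions into \eqref{eq 3.3} then produces the full chain of equalities in the statement.

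Because every step is a routine substitution into an already-proved identity, there is no genuine obstacle here. The only point that requires a little care is the sign bookkeeping that gives $\beta = -1/\Phi$, and hence $|\beta| = 1/\Phi$ rather than $\Phi$; this is dictated by the convention $|\alpha| > |\beta|$ fixed at the outset, which forces $\beta$ to be the root of smaller modulus. Keeping track of this convention (and of $|Q| = 1$) is all that stands between Corollary \ref{coll 3.2} and the clean Fibonacci–Lucas form claimed in Corollary \ref{coll d}.
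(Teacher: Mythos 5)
Your proposal is correct and coincides with the paper's treatment: the paper likewise obtains this corollary as an immediate specialization of Corollary \ref{coll 3.2} to $(P , Q) = (1 , -1)$, using $U_n = F_n$, $V_n = L_n$, $\sqrt{\Delta} = \sqrt{5}$, $|\beta| = 1/\Phi$ and $|Q| = 1$. Nothing further is needed.
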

Next, the application of Corollary \ref{coll 3.2} for $(P , Q) = (4 , -1)$ can be announced in the following form:
\begin{coll}\label{coll a}
We have
\begin{equation}\label{eq b}
\sum_{r \in \Lambda} \left|\sqrt{5} - r\right| = 2 \sum_{n = 1}^{+ \infty} \frac{1}{F_{3 n} \Phi^{3 n}} = 4 \sum_{n = 1}^{+ \infty} \frac{1}{F_{6 n} F_{6 n - 3}} ,
\end{equation}
where $\Lambda$ denotes the set of the regular continued fraction convergents of the number $\sqrt{5}$.
\end{coll}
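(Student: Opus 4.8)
The plan is to specialize Corollary~\ref{coll 3.2} to the pair $(P,Q) = (4,-1)$ and then to recognize the resulting rational approximations $V_n/U_n$ as exactly the regular continued-fraction convergents of $\sqrt5$. First I would record the arithmetic of this special case. For $(P,Q) = (4,-1)$ we have $\Delta = P^2 - 4Q = 20$, so $\sqrt\Delta = 2\sqrt5$, and the roots of $X^2 - 4X - 1$ are $\alpha = 2 + \sqrt5$ and $\beta = 2 - \sqrt5$. The crucial observation is that $\alpha = \Phi^3$ and $\beta = \Phibar^3$, a one-line computation from $\Phi = \frac{1+\sqrt5}2$. Feeding this into the closed forms \eqref{eqa2} and comparing with the Binet expressions for the Fibonacci and Lucas numbers gives, for all $n \ge 1$,
\begin{equation*}
U_n = \frac{\Phi^{3n} - \Phibar^{3n}}{2\sqrt5} = \frac{F_{3n}}2, \qquad V_n = \Phi^{3n} + \Phibar^{3n} = L_{3n},
\end{equation*}
together with $|\beta| = \sqrt5 - 2 = \Phi^{-3}$.

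Next I would substitute these expressions into the three members of \eqref{eq 3.3}. Since $\sqrt\Delta = 2\sqrt5$ and $V_n/U_n = 2L_{3n}/F_{3n}$, each term of the leftmost sum equals $2\,\bigl|\sqrt5 - L_{3n}/F_{3n}\bigr|$, while the middle and right sums become $4\sum 1/(F_{3n}\Phi^{3n})$ and $8\sum 1/(F_{6n}F_{6n-3})$ respectively (using $|Q| = 1$ and $U_{2n}U_{2n-1} = F_{6n}F_{6n-3}/4$). Dividing the whole chain of equalities by $2$ then yields
\begin{equation*}
\sum_{n=1}^{+\infty}\left|\sqrt5 - \frac{L_{3n}}{F_{3n}}\right| = 2\sum_{n=1}^{+\infty}\frac1{F_{3n}\Phi^{3n}} = 4\sum_{n=1}^{+\infty}\frac1{F_{6n}F_{6n-3}}.
\end{equation*}

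The remaining and most substantive step is to identify the set $\{L_{3n}/F_{3n} : n \ge 1\}$ with the set $\Lambda$ of regular continued-fraction convergents of $\sqrt5$. To this end I would first run the continued-fraction algorithm on $\sqrt5$: from $\sqrt5 = 2 + (\sqrt5 - 2)$ and $1/(\sqrt5 - 2) = \sqrt5 + 2 = 4 + (\sqrt5 - 2)$ it follows that $\sqrt5 = [2;\overline4]$. Hence its convergents $p_n/q_n$ are governed by the recurrence $w_{n} = 4w_{n-1} + w_{n-2}$ with $(p_{-1},p_0) = (1,2)$ and $(q_{-1},q_0) = (0,1)$. Because $(U_n)$ and $(V_n)$ lie in $\mathscr L(4,-1)$, the sequences $L_{3(n+1)}/2$ and $F_{3(n+1)}/2$ satisfy exactly this recurrence, and their initial values $L_3/2 = 2,\ L_6/2 = 9$ and $F_3/2 = 1,\ F_6/2 = 4$ match $(p_0,p_1)$ and $(q_0,q_1)$; by uniqueness of the solution of a second-order recurrence they coincide with $(p_n)$ and $(q_n)$. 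Thus for each $n \ge 1$ the number $L_{3n}/F_{3n}$ is precisely the $(n-1)$-st convergent of $\sqrt5$, and conversely every convergent arises this way, so $\Lambda = \{L_{3n}/F_{3n} : n \ge 1\}$ and $\sum_{r\in\Lambda}|\sqrt5 - r| = \sum_{n\ge1}\bigl|\sqrt5 - L_{3n}/F_{3n}\bigr|$.

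Combining this identification with the displayed identity gives \eqref{eq b} and completes the proof. The arithmetic of the specialization is entirely routine; the one point requiring genuine care is the final identification, namely establishing both the periodic expansion $\sqrt5 = [2;\overline4]$ and the matching of the convergent recurrence with the tripled Fibonacci and Lucas subsequences, which is where the factor-of-$3$ index and the coincidence $\alpha = \Phi^3$ do all the work.
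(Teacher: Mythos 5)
Your proposal is correct and follows essentially the same route as the paper: specialize Corollary \ref{coll 3.2} to $(P,Q)=(4,-1)$, use $\alpha=\Phi^{3}$, $\beta=\Phibar^{3}$ to get $U_n=F_{3n}/2$, $V_n=L_{3n}$, and identify $L_{3n}/F_{3n}$ with the convergents of $\sqrt5=[2;\overline{4}]$ by matching the recurrence $w_{n+2}=4w_{n+1}+w_n$ and initial values. The only cosmetic difference is direction: the paper starts from the convergent recurrences and recognizes them as the Lucas pair for $(4,-1)$, whereas you start from the Lucas pair and recognize the convergents, which amounts to the same verification.
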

\begin{proof}
The continued fraction expansion of the number $\sqrt{5}$ is known to be equal to:
$$
\sqrt{5} = [2 ; 4 , 4 , 4 , \dots]
$$
(see e.g., \cite[page 116]{olds}). From this we derive that for all $n \in \N$, the $n$th-order convergent of $\sqrt{5}$ is $r_n = p_n / q_n$, where ${(p_n)}_{n \in \N}$ and ${(q_n)}_{n \in \N}$ are the sequences of positive integers, satisfying the recurrence relations:
\begin{equation*}
\begin{split}
p_{n + 2} &= 4 p_{n + 1} + p_n \\
q_{n + 2} &= 4 q_{n + 1} + q_n
\end{split} ~~~~~~~~~~ (\forall n \in \N) ,
\end{equation*}
with initial values: $p_0 = 2$, $q_0 = 1$, $p_1 = 9$, $q_1 = 4$. Using those recurrence relations, the sequences ${(p_n)}_n$ and ${(q_n)}_n$ can be extended to negative indices $n$. Doing so, we obtain $p_{-1} = 1$ and $q_{-1} = 0$. It follows from this that $(U_n , V_n) = (q_{n - 1} , 2 p_{n - 1})$ ($n \in \N$) is exactly the couple of Lucas sequences corresponding to $(P , Q) = (4 , -1)$. Since $(P, Q) = (4 , -1)$ gives $\Delta = 20$ and $(\alpha , \beta) = (2 + \sqrt{5} , 2 - \sqrt{5}) = (\Phi^3 , \Phibar^3)$, we deduce by applying Formulas \eqref{eqa2} that we have for all $n \in \N$:
\begin{equation*}
\begin{split}
q_{n - 1} &= \frac{\left(\Phi^3\right)^n - \left(\Phibar^3\right)^n}{\Phi^3 - \Phibar^3} = \frac{\Phi^{3 n} - \Phibar^{3 n}}{2 \sqrt{5}} = \frac{1}{2} F_{3 n} , \\[2mm]
2 p_{n - 1} &= \left(\Phi^3\right)^n + \left(\Phibar^3\right)^n = \Phi^{3 n} + \Phibar^{3 n} = L_{3 n} .
\end{split}
\end{equation*}
Thus
$$
(p_n , q_n) = \left(\frac{1}{2} L_{3 n + 3} , \frac{1}{2} F_{3 n + 3}\right) ~~~~~~~~~~ (\forall n \in \N) .
$$
By applying then Corollary \ref{coll 3.2} for $(P , Q) = (4 , - 1)$, we get
$$
\sum_{n = 1}^{+ \infty} \left\vert\sqrt{20} - \frac{2 p_{n - 1}}{q_{n - 1}}\right\vert = 4 \sum_{n = 1}^{+ \infty} \frac{1}{F_{3 n} \Phi^{3 n}} = 8 \sum_{n = 1}^{+ \infty} \frac{1}{F_{6 n} F_{6 n - 3}} ,
$$
that is
$$
\sum_{n = 0}^{+ \infty} \left|\sqrt{5} - r_n\right| = 2 \sum_{n = 1}^{+ \infty} \frac{1}{F_{3 n} \Phi^{3 n}} = 4 \sum_{n = 1}^{+ \infty} \frac{1}{F_{6 n} F_{6 n - 3}} ,
$$
as required. The corollary is proved.
\end{proof}

Further, by applying Theorem \ref{thm 3.1} for $(P , Q) = (1 , -1)$ and taking successively: $a_n = 2 n$, $a_n = 2 n - 1$ and then $a_n = 2 n + 1$, we respectively obtain the three following formulas:
\begin{equation}\label{eq 3.5}
\begin{split}
\sum_{n = 1}^{+ \infty} \frac{(-1)^{n - 1}}{F_{2 n} \Phi^{2 n}} = \sum_{n = 1}^{+ \infty} \frac{1}{F_{4 n} F_{4 n - 2}} ~,~ 
\sum_{n = 1}^{+ \infty} \frac{(-1)^{n - 1}}{F_{2 n - 1} \Phi^{2 n - 1}} = \sum_{n = 1}^{+ \infty} \frac{1}{F_{4 n - 1} F_{4 n - 3}} ~, \\
\sum_{n = 1}^{+ \infty} \frac{(-1)^{n - 1}}{F_{2 n + 1} \Phi^{2 n + 1}} = \sum_{n = 1}^{+ \infty} \frac{1}{F_{4 n + 1} F_{4 n - 1}} .
\end{split}
\end{equation}
Remark that the addition (side to side) of the two last formulas of \eqref{eq 3.5} gives the first formula of \eqref{eqza}.

Now, by applying another technique of grouping terms, we are going to show that some series of the form $\sum_{n \geq 0} \beta^{a_n} / U_{a_n}$ (where ${(a_n)}_n$ is an arithmetic sequence of a particular type) can be also transformed to series whose terms are rational numbers when $P , Q \in \Z$. We have the following:

\begin{thm}\label{thm 3.3}
For any positive integer $r$, we have
\begin{equation}\label{eq a}
\sum_{n = 0}^{+ \infty} \frac{\beta^{2^r n + 2^{r - 1}}}{U_{2^r n + 2^{r - 1}}} = \sum_{n = 1}^{+ \infty} \frac{Q^{2^{r - 1} n}}{U_{2^r n}} .
\end{equation}
\end{thm}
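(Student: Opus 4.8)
The plan is to reduce the claim to an elementary identity obtained by the substitution $k \mapsto 2 k$ inside a single term $\beta^{k}/U_{k}$, and then to reorganize one ``master'' series of the form $\sum_{j \geq 1} \beta^{j m}/U_{j m}$ according to the parity of the index $j$. Writing $m := 2^{r - 1}$, the exponents on the left-hand side of \eqref{eq a} are $2^{r} n + 2^{r - 1} = (2 n + 1) m$, i.e.\ the odd positive multiples of $m$, while the indices $2^{r} n = (2 n) m$ appearing on the right-hand side are the even positive multiples of $m$. This parity observation is what makes the two sides comparable, and it is the concrete form that the ``grouping of terms'' takes here.

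First I would establish, for every positive integer $k$, the identity $\frac{\beta^{k}}{U_{k}} - \frac{\beta^{2 k}}{U_{2 k}} = \frac{Q^{k}}{U_{2 k}}$. This follows by putting the left-hand side over the common denominator $U_{2 k}$, using $U_{2 k} = U_{k} V_{k}$ from \eqref{eqa4}, and then simplifying the numerator $\beta^{k} V_{k} - \beta^{2 k} = \beta^{k}(V_{k} - \beta^{k}) = \beta^{k} \alpha^{k} = (\alpha \beta)^{k} = Q^{k}$, where I have used $V_{k} = \alpha^{k} + \beta^{k}$ from \eqref{eqa2} together with $\alpha \beta = Q$. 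Specializing to $k = n m$ and summing over all $n \geq 1$ then gives $\sum_{n \geq 1} \frac{\beta^{n m}}{U_{n m}} - \sum_{n \geq 1} \frac{\beta^{2 n m}}{U_{2 n m}} = \sum_{n \geq 1} \frac{Q^{n m}}{U_{2 n m}}$, whose right-hand side is already the right-hand side of \eqref{eq a}.

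It then remains to identify the left-hand side of that summed identity with the left-hand side of \eqref{eq a}. The first series ranges over all positive multiples $m, 2 m, 3 m, \dots$ of $m$, while the second ranges exactly over the even multiples $2 m, 4 m, 6 m, \dots$; subtracting therefore leaves precisely the sum over the odd multiples $(2 n + 1) m$ with $n \geq 0$, which is the left-hand side of \eqref{eq a}. The only point needing care is the legitimacy of this splitting and termwise subtraction, and this is where I would be most careful: since $|\beta / \alpha| < 1$, the formulas \eqref{eqa2} give $\beta^{k}/U_{k} = O\!\left((\beta/\alpha)^{k}\right)$ and $Q^{k}/U_{2 k} = O\!\left((\beta/\alpha)^{k}\right)$, so all three series converge absolutely and geometrically; absolute convergence then justifies both the reindexing by parity and the termwise manipulation. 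The main (mild) obstacle is thus not the convergence bookkeeping but rather spotting the doubling identity above, after which the result falls out immediately.
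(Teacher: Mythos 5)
Your proof is correct and follows essentially the same route as the paper: both rest on splitting the positive multiples of $2^{r-1}$ by parity and on the identity $\beta^{k}/U_{k} - \beta^{2k}/U_{2k} = Q^{k}/U_{2k}$ obtained from $U_{2k} = U_{k}V_{k}$ and $\beta^{k}V_{k} - \beta^{2k} = (\alpha\beta)^{k}$. The only cosmetic difference is that you prove the termwise identity first and then sum, whereas the paper starts from the set decomposition and then combines terms; your explicit convergence justification is a welcome addition the paper leaves implicit.
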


\begin{proof}
Let $r$ be a positive integer. From the trivial equality of sets:
$$
\left\{2^r n + 2^{r - 1} ~,~ n \in \N\right\} = \left\{2^{r - 1} n ~,~ n \in \N^*\right\} \setminus \left\{2^r n ~,~ n \in \N^*\right\} ,
$$
we get
\begin{eqnarray*}
\sum_{n = 0}^{+ \infty} \frac{\beta^{2^r n + 2^{r - 1}}}{U_{2^r n + 2^{r - 1}}} & = & \sum_{n = 1}^{+ \infty} \frac{\beta^{2^{r - 1} n}}{U_{2^{r - 1} n}} - \sum_{n = 1}^{+ \infty} \frac{\beta^{2^r n}}{U_{2^r n}} \\
& = & \sum_{n = 1}^{+ \infty} \left(\frac{\beta^{2^{r - 1} n}}{U_{2^{r - 1} n}} - \frac{\beta^{2^r n}}{U_{2^r n}}\right) \\
& = & \sum_{n = 1}^{+ \infty} \frac{\beta^{2^{r - 1} n} V_{2^{r - 1} n} - \beta^{2^r n}}{U_{2^r n}}
\end{eqnarray*}
(since $U_{2^r n} = U_{2^{r - 1} n} V_{2^{r - 1} n}$, according to \eqref{eqa4}). But since $\beta^{2^{r - 1} n} V_{2^{r - 1} n} - \beta^{2^r n} =$ \linebreak $\beta^{2^{r - 1} n} \left(\alpha^{2^{r - 1} n} + \beta^{2^{r - 1} n}\right) - \beta^{2^r n} = (\alpha \beta)^{2^{r - 1} n} = Q^{2^{r - 1} n}$ (according to \eqref{eqa2}), we conclude that:
$$
\sum_{n = 0}^{+ \infty} \frac{\beta^{2^r n + 2^{r - 1}}}{U_{2^r n + 2^{r - 1}}} = \sum_{n = 1}^{+ \infty} \frac{Q^{2^{r - 1} n}}{U_{2^r n}} ,
$$
as required. The theorem is proved.
\end{proof}

To finish, let us see what Theorem \ref{thm 3.3} gives for the usual Fibonacci sequence. By taking in Theorem \ref{thm 3.3}: $(P , Q) = (1 , -1)$ and $r = 1$, we obtain the following formula:
\begin{equation}\label{eqa7}
\sum_{n = 0}^{+ \infty} \frac{1}{F_{2 n + 1} \Phi^{2 n + 1}} = \sum_{n = 1}^{+ \infty} \frac{(-1)^{n - 1}}{F_{2 n}} .
\end{equation}
Next, by taking in Theorem \ref{thm 3.3}: $(P , Q) = (1 , -1)$ and $r \geq 2$, we obtain the formula:
\begin{equation}\label{eqa8}
\sum_{n = 0}^{+ \infty} \frac{1}{F_{2^r n + 2^{r - 1}} \Phi^{2^r n + 2^{r - 1}}} = \sum_{n = 1}^{+ \infty} \frac{1}{F_{2^r n}} ~~~~~~~~~~ (\forall r \geq 2) .
\end{equation}
Note that the particular case corresponding to $r = 2$ of the last formula; that is the formula
$$
\sum_{n = 0}^{+ \infty} \frac{1}{F_{4 n + 2} \Phi^{4 n + 2}} = \sum_{n = 1}^{+ \infty} \frac{1}{F_{4 n}} ,
$$
was already pointed out by Melham and Shannon \cite{melh} who proved it by summing both sides of Formula \eqref{eqzc} over $k$, lying in the set of the odd positive integers.

\begin{remark}
Transforming a series of real terms into a series of rational terms could serve, for example, to show the irrationality of the sum of such a series. For example, for ${(w_n)}_n \in \mathscr{L}(P , Q)$, Andr\'e-Jeannin \cite{jean1} proved the irrationality of the series $\sum_{n = 1}^{+ \infty} t^n/w_n$, when $P \in \Z^*$, $Q = \pm 1$, $t \in \Z^*$ and $|t| < |\alpha|$; thus including the series in \eqref{eq a} of Theorem \ref{thm 3.3} (if we assume $P \in \Z^*$ and $Q = \pm 1$). Other similar and related results can also be found in \cite{bor1,bor2,mata,prev}.
\end{remark}

\section{Acknowledgments}
The author would like to thank the editor and the anonymous referee for their helpful comments.

\bigskip
\hrule
\bigskip

\noindent 2010 {\it Mathematics Subject Classification}: 
Primary 11B39; Secondary 97I30.

\noindent \emph{Keywords:} Lucas sequences, Fibonacci numbers, convergent series.

\end{document}